\theoremstyle{plain}
\newtheorem{theorem}{Theorem}[section]
\newtheorem{corollary}[theorem]{Corollary}
\newtheorem{proposition}[theorem]{Proposition}
\newtheorem{conjecture}[theorem]{Conjecture}
\theoremstyle{definition}
\newtheorem{definition}[theorem]{Definition}
\theoremstyle{remark}
\theoremstyle{plain}
\numberwithin{equation}{section}
\newcommand{\parder}[3][Default]{
	\frac{\partial \ifthenelse{\equal{#1}{Default}}{}{^{#1}}#2}{
              \partial #3 \ifthenelse{\equal{#1}{Default}}{}{^{#1}}}}
\newcommand{\jac}{{\mathcal{J}}}
\newcommand{\GL}{\operatorname{GL}}
\newcommand{\tr}{\operatorname{tr}}
\newcommand{\Mat}{\operatorname{Mat}}
\newcommand{\rk}{\operatorname{rk}}
\newcommand{\chr}{\operatorname{char}}
\newcommand{\imp}{{\mathversion{bold}$\Rightarrow$ }}
\newcommand{\pmi}{{\mathversion{bold}$\Leftarrow$ }}
\newcommand{\C}{{\mathbb C}}
\newcommand{\F}{{\mathbb F}}
\newcommand{\N}{{\mathbb N}}
\newcommand{\p}{{\mathfrak p}}
\title{The strong nilpotency index of a matrix}
\author{Michiel de Bondt \\
	Radboud University, Nijmegen, The Netherlands \\
	Email: debondt@math.ru.nl}
\begin{document}

\maketitle

\begin{abstract}
\noindent
It is known that strongly nilpotent matrices over a division ring 
are linearly triangularizable. We describe the structure of such matrices 
in terms of the strong nilpotency index. We apply our results
on quasi-translation $x + H$ such that $\jac H$ has strong nilpotency index two.
\end{abstract}

\medskip\noindent
\emph{Keywords:} strongly nilpotent matrices, linear triangularization, linear dependence, 
quasi-translation.

\medskip\noindent
\emph{2010 Mathematics Subject Classification:} 14R10; 14R15; 15A03; 15A04.

\section{Introduction}

The Jacobian conjecture asserts that a polynomial
map $F$ over a field of characteristic zero has a polynomial inverse $G$ 
in case $\det \jac F$ is a unit of the base field. In \cite{bcw} and \cite{yagzhev},
it is show that the Jacobian Conjecture holds if it holds for all polynomial maps $x + H$
of such that $\jac H$ is nilpotent. This result is refined in \cite[\S 6.2]{Arnobook},
where it is shown that for any polynomial map $F$ of degree $d$ in dimension $n$,
the Jacobian conjecture holds if it holds for all polynomial maps $x + H$
of degree $d$ in dimension $(d-1)n$ such that $\jac H$ is nilpotent, which
subsequently holds if the Jacobian conjecture holds for all polynomial maps 
$x + H$ of degree $d$ in dimension $(d-1)n + 1$ such that $H$ is homogeneous of 
degree $d$.

So it is natural to look at nilpotent Jacobian matrices. For quadratic maps, 
it is already known that the Jacobian conjecture holds, but since we have $(d-1)n = n$
in that case, we see that reduction to the case that $F = x + H$ with $\jac H$ nilpotent
does not cost extra dimensions. But one may even assume that additionally, $H$ is 
homogeneous. That is why more than twenty years ago, Gary Meisters and
Czes{\l}aw Olech looked at nilpotent Jacobian matrices of quadratic homogeneous $H$ in low 
dimensions in \cite{mo}, and it appeared that in dimensions $n \le 4$, the Jacobian matrix 
of $H$ was so-called {\em strongly nilpotent}, but not necessarily in higher dimensions.

Somewhat later, but still more than fifteen years ago, 
polynomial maps $x + H$ over a field of characteristic zero such that $\jac H$
is strongly nilpotent were classified as so-called {\em linearly triangularizable}
polynomial maps, in both \cite{eh} and \cite{Yu}. Since linear triangularizable maps
are invertible, the Jacobian conjecture holds for polynomial maps $x + H$ over a field 
of characteristic zero such that $\jac H$ is strongly nilpotent.
But there are no results about the index of
strong nilpotency: just as with the nilpotency index for nilpotent matrices,
on can define the strong nilpotency index for strongly nilpotent matrices as the 
minimum number of factors such that its product is zero.

Actually, there are two equivalent definitions for strongly nilpotent matrices
over a field of characteristic zero, but both definitions no longer correspond 
when the base ring becomes arbitrary. Sometimes, the definitions only deal with
matrices which are Jacobians, but such a restriction is unnecessary, see also 
equation (\ref{chainrule}) in the beginning of the proof of corollary \ref{strongnilcor} 
below. It is neither necessary to assume that the base 
ring is commutative or even a field. Although the definitions do not require that the base 
ring is unital, all results in this article are about rings $R$ with unity.

The first of both definition is given in 
\cite{mo}. Write $f|_{x=g}$ for substituting $x$ by $g$ in $f$.

\begin{definition}[Meisters and Olech in \cite{mo}] \label{MOdef}
Let $R[x]$ be the polynomial ring over a ring $R$ (not necessarily commutative)
in $n$ indeterminates $x_1, x_2, \ldots, x_n$. A square matrix $M$ with entries in $R[x]$
is called \emph{strongly nilpotent} if for some $r \in \N$, we have 
\begin{equation} \label{MOeq}
M|_{x = v_1} \cdot M|_{x = v_2} \cdot \cdots \cdot M|_{x = v_r} = 0
\end{equation}
for all $v_i \in R^n$.
\end{definition}

\noindent
Actually, $r$ is equal to the dimension of the matrix $M$ in the original definition,
but I consider that a consequence of restricting to fields of characteristic zero, or
just to reduced commutative rings, see the later proposition \ref{strongnilyured}.

\begin{definition}[Van den Essen and Hubbers in \cite{eh}] \label{EHdef}
Let $R[x]$ be the polynomial ring over a ring $R$ (not necessarily commutative)
in $n$ indeterminates $x_1, x_2, \ldots, x_n$, which commute with each other and 
with elements of $R$. A square matrix $M$ with entries in $R[x]$
is called \emph{strongly nilpotent} if for some $r \in \N$, we have 
\begin{equation} \label{EHeq}
M|_{x = y^{(1)}} \cdot M|_{x = y^{(2)}} \cdot \cdots \cdot M|_{x = y^{(r)}} = 0
\end{equation} 
where the $y^{(i)}$ are other tuples of $n$ indeterminates, 
which commute with each other and with elements of $R$.
\end{definition}

\noindent
Again, $r$ is equal to the dimension of the matrix $M$ in the original definition, 
which is given in \cite{eh}. Over infinite integral domains, both definitions are the same. 
We call the minimum possible $r \in \N$ such that (\ref{MOeq}) or (\ref{EHeq}) 
respectively holds the \emph{strong nilpotency index} of $M$.

In section 2, we consider definition \ref{EHdef} because it does not correspond to definition 
\ref{MOdef} when $R$ is a finite field. This is because different polynomials can be the same as a 
function, whence substituting all possible elements of $R$ in the indeterminates will not 
distinguish both polynomials.
For instance, the polynomials $x_1$ and $x_1^q$ over $\F_q$ are the same as functions, so they act 
equal on substituting an element of $\F_q$ for $x_1$. Over infinite 
integral domains, different polynomials are always different function as well. So definitions
\ref{EHdef} and \ref{MOdef} are equivalent when $R$ is an infinite integral domain.

In \cite{Yu}, definition \ref{MOdef} has been generalized to functions to $R$ in general, not just
multivariate polyniomials, which we will discuss in section 3. Just like the main results in \cite{eh} 
and \cite{Yu} are similar, while the respective definitions \ref{EHdef} and the generalization of 
\ref{MOdef} are different, the refinements of these results by way of the strong nilpotency index, 
which are theorem \ref{strongnil} and theorem \ref{strongnilyu} respectively, are similar as well.

In the last section, we apply the results of section 2 on quasi-translations
of strong nilpotency index two. For more information about quasi-translations, we refer to the 
last section and its references.

\section{Strongly nilpotent matrices in the sense of \cite{eh}}

In \cite{eh}, the authors prove that strongly nilpotent Jacobians in the sense of 
\ref{EHdef} are linearly triangularizable, but there is no need to restrict to
Jacobians. Instead, one can look at any matrix of polynomials over a division ring
$D$. Notice that for finite fields, different polynomials might represent the same
function. Therefore, the generalization to polynomial matrices is not a special case of 
the generalization of definition \ref{MOdef} in \cite{Yu} that will follow in 
the next section.

\begin{theorem} \label{strongnil}
Let $n \ge 0$ and $x = x_1,x_2,\ldots,x_n$ be variables which commute with 
each other and with elements of a division ring $D$. Assume that $M \in \Mat_m(D[x])$,
where $m \ge 1$.

Then $M$ has strong nilpotency index $r$ (in the sense of definition \cite{eh}), if and only if 
there exists a $T \in \GL_m(D)$ such that $T^{-1} M T$ is of the form
\begin{equation} \label{strongnileq}
\left(\begin{array}{ccccc}
0_{s_1} & & & & \emptyset \\
A_1 & 0_{s_2} & & & \\ 
& A_2 & \ddots & & \\ 
& & \ddots & 0_{s_{r-1}} & \\
* & & & A_{r-1} & 0_{s_r} 
\end{array} \right)
\end{equation}
where $0_{s_i}$ is the square zero matrix of size $s_i \ge 1$ and $A_i$ has independent columns 
over $D$ for each $i$. In particular, the strong nilpotency index of $M$ does not exceed $m$ in case
$M$ is strongly nilpotent.
\end{theorem}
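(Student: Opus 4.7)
The plan is to associate to $M$ a natural increasing chain of $D$-subspaces of $D^m$ whose length equals the strong nilpotency index and whose successive quotients realize the block decomposition. Define $V_0 = 0$ and, inductively, $V_k = \{v \in D^m : Mv \in V_{k-1}[x]\}$, where $V_{k-1}[x]$ denotes the $D[x]$-polynomial vectors whose $D^m$-valued coefficients all lie in $V_{k-1}$.

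A short induction on $k$ shows that $V_k = D^m$ if and only if $M|_{y^{(1)}} \cdots M|_{y^{(k)}} = 0$ as a polynomial identity in the sense of Definition \ref{EHdef}: the easy direction iterates the inclusion $v \in V_k \Rightarrow Mv \in V_{k-1}[x]$ under substitution of fresh variable tuples, while the nontrivial direction peels off the rightmost factor from the identity and compares $D^m$-valued coefficients of $y^{(k)}$-monomials to reduce to length $k-1$. The chain also stabilizes: if $V_k = V_{k+1}$ for some $k$, then any $v \in V_{k+2}$ satisfies $Mv \in V_{k+1}[x] = V_k[x]$, so $v \in V_{k+1}$. Consequently, when the strong nilpotency index is $r$, the inclusions $0 = V_0 \subsetneq V_1 \subsetneq \cdots \subsetneq V_r = D^m$ are all strict and $r \le m$. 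Choosing $D$-subspace complements $W_j$ with $V_{r-j+1} = V_{r-j} \oplus W_j$ gives a decomposition $D^m = W_1 \oplus \cdots \oplus W_r$ with $\dim W_j = s_j$; let $T \in \GL_m(D)$ encode this change of basis. Because $v \in W_j \subseteq V_{r-j+1}$ forces $Mv \in V_{r-j}[x] = (W_{j+1} \oplus \cdots \oplus W_r)[x]$, the matrix $T^{-1}MT$ acquires the stated block lower triangular shape with zero diagonal blocks and subdiagonal blocks $A_j$ sending $W_j$ into $W_{j+1}[x]$. The $D$-independence of the columns of $A_j$ then follows: a nontrivial $w \in W_j$ with $A_j w = 0$ would have $Mw \in V_{r-j-1}[x]$, hence $w \in V_{r-j}$, contradicting $W_j \cap V_{r-j} = 0$.

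For the converse direction, any $N$ in the stated form admits the block flag $U_k \subseteq D^m$ spanned by the last $k$ block-basis vectors; the block structure of $N$ gives $N U_k \subseteq U_{k-1}[x]$, and iterating as in the previous paragraph yields $N|_{y^{(1)}} \cdots N|_{y^{(r)}} = 0$. To confirm the index is not strictly smaller, compute the $(r, 1)$-block of $N|_{y^{(1)}} \cdots N|_{y^{(r-1)}}$: among the paths of $r - 1$ factors descending from column-block $1$ to row-block $r$, only the subdiagonal path survives, since each factor must decrease the block index by exactly one; this gives $A_{r-1}(y^{(1)}) A_{r-2}(y^{(2)}) \cdots A_1(y^{(r-1)})$. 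A brief induction on the number of factors using the $D$-independence of columns of each $A_i$ — a nonzero polynomial-vector coefficient remains nonzero after left-multiplication by $A_i$ specialized at a fresh variable tuple — shows this product is a nonzero element of $D[y^{(1)}, \ldots, y^{(r-1)}]^{s_r \times s_1}$. I expect the main obstacle to be the equivalence $V_k = D^m \Leftrightarrow M|_{y^{(1)}} \cdots M|_{y^{(k)}} = 0$ underpinning everything else: extracting $D^m$-valued coefficients from a polynomial identity across multiple fresh tuples of indeterminates requires care with the non-commutative base ring $D$ and the interaction of the substitutions with matrix multiplication. Once this equivalence is secured, the chain stabilization, block-form extraction, and column-independence arguments are routine linear algebra over the division ring $D$.
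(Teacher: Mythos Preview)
Your argument is correct and proceeds along a genuinely different route from the paper's. The paper works by induction on the matrix size $m$: from the nonvanishing of the length-$(r-1)$ product it extracts a single nonzero coefficient matrix $C$ with $M\cdot C=0$, uses this to clear the last $s_r$ columns of $M$ by a change of basis, and then passes to the leading principal $(m-s_r)$-minor, which it shows has strong nilpotency index $r-1$. Your approach instead builds the entire block flag in one stroke via the intrinsic chain $V_k=\{v\in D^m:Mv\in V_{k-1}[x]\}$, identifies $V_k$ with the right kernel of $M|_{y^{(1)}}\cdots M|_{y^{(k)}}$, and reads off the block form from complements $W_j$ of $V_{r-j}$ in $V_{r-j+1}$.

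What each buys: the paper's inductive peeling is very concrete---one literally sees the dependence among columns of $M$ arising from a specific coefficient matrix---but the choice of $T$ at each stage is noncanonical and the induction bookkeeping (showing (\ref{indeq2}) vanishes while (\ref{indeq3}) does not) is somewhat delicate. Your flag $V_0\subsetneq\cdots\subsetneq V_r$ is canonical, the stabilization argument immediately yields $r\le m$, and the column-independence of each $A_j$ drops out from $W_j\cap V_{r-j}=0$ rather than from a maximality choice. The one point you rightly flag as needing care---that $v\in V_k$ is equivalent to $M|_{y^{(1)}}\cdots M|_{y^{(k)}}v=0$---is exactly what makes the whole scheme work, and it goes through cleanly once one notes that the $y^{(k)}$-coefficients of $M|_{y^{(k)}}v$ lie in $D^m$ and can be fed into the inductive hypothesis for $V_{k-1}$; the commutativity of the $y^{(j)}_i$ with $D$ ensures coefficient extraction behaves as expected. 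Your treatment of the converse direction, isolating the subdiagonal path in the $(r,1)$-block of the length-$(r-1)$ product, is essentially the same as the paper's.
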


\begin{proof}
Write $y^{(i)} = (y^{(i)}_1, y^{(i)}_2, \ldots, y^{(i)}_n)$ for each $i$.
\begin{itemize}

\item[\imp]
Assume that $M$ has nilpotency index $r$. Then 
\begin{align}
M|_{x=y^{(r)}} \cdot 
M|_{x=y^{(r-1)}} \cdot \cdots \cdot M|_{x=y^{(2)}} \cdot M|_{x=y^{(1)}} &= 0 \label{eq0} \\
\intertext{but}
M|_{x=y^{(r-1)}} \cdot 
\cdots \cdot M|_{x=y^{(2)}} \cdot M|_{x=y^{(1)}} &\ne 0 \label{neq0}
\end{align}
Thus for some term in $y^{(r-1)}, \ldots,  y^{(2)}, y^{(1)}$, the coefficient matrix 
$C \in \Mat_n(D)$ on the left hand side of (\ref{neq0}) does not vanish, and we obtain from 
(\ref{eq0}) that $M|_{x=y^{(r)}} \cdot C = 0$. Hence also $M \cdot C = 0$ and the columns 
of $M$ are dependent over $D$. Thus if we choose $T \in \GL_n(D)$ and $s_{r} \in \N$ maximal, 
such that the last $s_r$ columns of $M \cdot T$ are zero, then $s_r \ge 1$ and the first
$m - s_r$ columns of $M \cdot T$ are independent over $D$.

Now replace $M$ by $T^{-1} M T$. Then the first $m - s_r$ columns of $M$ become independent 
over $D$ and the last $s_r$ columns of $M$ become zero. 
Furthermore, by the maximality of $s_r$, we have that the first $m - s_r$ rows of the 
left hand side of (\ref{neq0}) become zero, since otherwise we can find another linear dependence 
between the columns of both $M|_{x=y^{(r)}}$ and $M$, namely by taking for $C$ the coefficient matrix of the 
left hand side of (\ref{neq0}) with respect to a term in $y^{(r-1)}, y^{(r-2)}, \ldots, y^{(2)}, y^{(1)}$, 
such that the first $m-s_{r}$ rows of $C$ are not zero everywhere. Hence the left hand
side of (\ref{neq0}) is of the form
\begin{equation} \label{indeq1} 
\left(\begin{array}{cc} 0_{m-s_r} & \emptyset \\
\tilde{A} & 0_{s_r} \end{array} \right)
\end{equation}
where $0_{m-s_r}$ is the square zero matrix of size $m - s_r$ and $\tilde{A} \ne 0$ because of 
(\ref{neq0}).

Let $\tilde{M}$ be the leading principal 
minor matrix of size $m - s_{r}$ of $M$. 
Since the last $s_r$ columns of $M$, and hence also of $M|_{x=y^{(i)}}$
for all $i \ge 2$, are zero, we have that the product
\begin{align} 
\tilde{M}|_{x=y^{(r-1)}} \cdot \tilde{M}|_{x=y^{(r-2)}} \cdot \cdots \cdot \tilde{M}|_{x=y^{(2)}} 
&\cdot \tilde{M}|_{x=y^{(1)}} \label{indeq2}
\intertext{is equal to the leading principal minor matrix of size $m - s_{r}$ of the left hand side
of (\ref{neq0}), which is the submatrix $0_{m-s_r}$ of (\ref{indeq1}).
The product}
\tilde{M}|_{x=y^{(r-2)}} \cdot \cdots \cdot \tilde{M}|_{x=y^{(2)}} 
&\cdot \tilde{M}|_{x=y^{(1)}} \label{indeq3}
\end{align}
is nonzero, because the last $s_r$ columns of $M|_{x=y^{(r-1)}}$ are zero and hence (\ref{indeq3}) 
is a factor of $\tilde{A}$. 

So by induction on $m$, there exists a $\tilde{T} \in \GL_{m-s_r}(D)$
such that $\tilde{T}^{-1} \tilde{M} \tilde{T}$ is of the form
$$
\left(\begin{array}{ccccc}
0_{s_1} & & & & \emptyset \\
A_1 & 0_{s_2} & & & \\ 
& A_2 & \ddots & & \\ 
& & \ddots & 0_{s_{r-2}} & \\
* & & & A_{r-2} & 0_{s_{r-1}} 
\end{array} \right)
$$
where $A_i$ has independent columns over $D$ for each $i$. Since the first $m - s_r$ columns of $M$ 
are independent over $D$ and the last $s_r$ columns of $M$ are zero, the desired result follows.

\item[\pmi] 
Assume that a $T \in \GL_m(D)$ as in theorem \ref{strongnil} exists. 
Then one can prove by induction on $j$ that for all $j \le r$, 
the first $s_1 + s_2 + \cdots + s_j$
rows of $(T^{-1}MT)|_{x=y^{(j)}} \cdot 
(T^{-1}MT)|_{x=y^{(j-1)}} \cdot 
\cdots \cdot (T^{-1}MT)|_{x=y^{(1)}}$ are zero. Hence the strong nilpotency index of $M$ does not 
exceed $r$. Furthermore, the first $s_1 + s_2 + \cdots + s_j$ rows of 
$(T^{-1}MT)|_{x=y^{(j-1)}} \cdot \cdots \cdot (T^{-1}MT)|_{x=y^{(1)}}$ are of the form
$$
\left( \begin{array}{cc} & \emptyset \\
A_{j-1}|_{x=y^{(j-1)}} \cdot \cdots \cdot A_1|_{x=y^{(1)}} & \end{array} \right)
$$
Since the columns of $A_i$ are independent over $D$ for each $i$, we obtain by looking at
coefficient matrices in a similar manner as in $\Rightarrow$ and by induction on $j$, that 
$A_{j-1}|_{x=y^{(j-1)}} \cdot \cdots \cdot A_{1}|_{x=y^{(1)}} \ne 0$ for all $j \le r$.
Consequently, $r$ is the strong nilpotency index of $M$. \qedhere

\end{itemize}
\end{proof}

\subsection*{Application to Jacobian matrices}

In addition to the result we get by taking for $M$ a Jacobian matrix in theorem
\ref{strongnil}, we have the following.

\begin{corollary} \label{strongnilcor}
Assume $H \in K[x]^n = K[x_1,x_2,\ldots,x_n]^n$, where $K$ is any field. 
Write $y^{(i)} = (y^{(i)}_1, y^{(i)}_2, \ldots, y^{(i)}_n)$ for all $i$.
 
Then for each nonzero $r \in \N$, the following statements are equivalent.
\begin{enumerate}

\item[(1)] 
$(\jac H)|_{x=y^{(r)}} \cdot (\jac H)|_{x=y^{(r-1)}} \cdot 
\cdots \cdot (\jac H)|_{x=y^{(1)}} = 0$, i.e.\@ the strong nilpotency index of $\jac H$ 
(in the sense of definition \ref{EHdef}) does not exceed $r$.

\item[(2)] 
There exists a $T \in \GL_n(K)$ 
such that the Jacobian of $T^{-1} H(Tx)$ is of the form
\begin{equation} \label{blocktriang}
\left(\begin{array}{cccc}
0_{s_1} & & & \emptyset \\
& 0_{s_2} & & \\ 
& & \ddots & \\
* & & & 0_{s_{r'}} 
\end{array} \right)
\end{equation}
where $0_{s_i}$ is the square zero matrix of size $s_i \ge 1$ for each $i$
and $r' = \min\{r,n\}$.

\item[(3)]
For all $j$ with $1 \le j \le r$, we have that the Jacobian
with respect to $y^{(1)}$ of
\begin{gather}
(\jac H)|_{x=y^{(r)}} \cdot 
   (\jac H)|_{x=y^{(r-1)}} \cdot \cdots 
   \cdot (\jac H)|_{x=y^{(j+1)}} \cdot \nonumber \\
t^{(j)}H\bigg(y^{(j)}+t^{(j-1)}H\Big(
   \cdots \big(y^{(2)}+t^{(1)}H(y^{(1)})\big) 
   \cdots \Big)\bigg) \label{Hyj}
\end{gather}
vanishes. (Thus (\ref{Hyj}) is constant with respect to $y^{(1)}$ when $\chr K = 0$.)

\item[(4)]
There exists a $j$ with $1 \le j \le r$, such that the Jacobian
with respect to $y^{(1)}$ of
\begin{gather}
(\jac H)|_{x=y^{(r)}} \cdot 
   (\jac H)|_{x=y^{(r-1)}} \cdot \cdots 
   \cdot (\jac H)|_{x=y^{(j+1)}} \cdot \nonumber \\
H\bigg(y^{(j)}+H\Big(
   \cdots \big(y^{(2)}+H(y^{(1)})\big) 
   \cdots \Big)\bigg) \label{Hyjw}
\end{gather}
vanishes. (Thus (\ref{Hyjw}) is constant with respect to $y^{(1)}$ when $\chr K = 0$.)

\end{enumerate}
\end{corollary}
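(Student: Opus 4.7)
The plan is to dispose of (1) $\iff$ (2) by a direct appeal to theorem \ref{strongnil}, and then close the cycle (1) $\Rightarrow$ (3) $\Rightarrow$ (4) $\Rightarrow$ (1) by a single chain-rule computation combined with a triangular polynomial substitution that inverts the nested form inside $H$.

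For (1) $\iff$ (2), I would feed $M := \jac H \in \Mat_n(K[x])$ into theorem \ref{strongnil}, which produces $T \in \GL_n(K)$ such that $T^{-1}(\jac H)(x)\,T$ is strictly block lower triangular with $r^* \le r$ zero diagonal blocks, where $r^*$ denotes the actual strong nilpotency index. Because polynomial substitution in $x$ preserves zero entries, the matrix $T^{-1}(\jac H)(Tx)\,T = \jac\bigl(T^{-1} H(Tx)\bigr)$ has the same shape. To reach exactly $r' = \min\{r,n\}$ blocks rather than $r^*$, I would split oversized diagonal blocks further; this is always possible because $r^* \le r' \le n$. The direction (2) $\Rightarrow$ (1) is immediate: any matrix of the form (\ref{blocktriang}), evaluated at any point, still sends each block strictly into later blocks, so a product of $r'$ such evaluations annihilates every vector, and since $r' \le r$ this already forces (1).

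For (1) $\Rightarrow$ (3), set $u^{(1)} := y^{(1)}$ and $u^{(k+1)} := y^{(k+1)} + t^{(k)} H(u^{(k)})$ and use the chain rule to prove by induction on $k$ that
\[
\frac{\partial u^{(k)}}{\partial y^{(1)}} \;=\; t^{(1)}\cdots t^{(k-1)} \cdot (\jac H)(u^{(k-1)}) \cdots (\jac H)(u^{(1)}).
\]
Differentiating (\ref{Hyj}) with respect to $y^{(1)}$ then yields
\[
t^{(1)} \cdots t^{(j)} \cdot (\jac H)(y^{(r)}) \cdots (\jac H)(y^{(j+1)}) \cdot (\jac H)(u^{(j)}) \cdots (\jac H)(u^{(1)}).
\]
Now (1) is a polynomial identity in the free indeterminates $y^{(1)}, \ldots, y^{(r)}$, so substituting $y^{(k)} \mapsto u^{(k)}$ for $k = 1, \ldots, j$ (legitimate by the functoriality of polynomial rings) turns it into precisely the statement that the displayed product vanishes, which is (3).

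The step (3) $\Rightarrow$ (4) is immediate by setting $t^{(1)} = \cdots = t^{(r)} = 1$ and selecting any $j$. For (4) $\Rightarrow$ (1), the same chain rule (now with all $t^{(i)} = 1$ and $u^{(k+1)} = y^{(k+1)} + H(u^{(k)})$) rewrites (4) as
\[
(\jac H)(y^{(r)}) \cdots (\jac H)(y^{(j+1)}) \cdot (\jac H)(u^{(j)}) \cdots (\jac H)(u^{(1)}) = 0.
\]
To eliminate the $u$'s, I will invert the triangular polynomial substitution by setting $y^{(1)} \mapsto v^{(1)}$ and $y^{(k+1)} \mapsto v^{(k+1)} - H(v^{(k)})$ for $1 \le k < j$, leaving $y^{(k)}$ for $k > j$ untouched; this is a bijective polynomial substitution under which $u^{(k)} \mapsto v^{(k)}$ for $k \le j$, so after renaming the $v$'s back to $y$'s we recover (1). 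The only genuinely calculational step is the chain-rule recursion for $\partial u^{(k)}/\partial y^{(1)}$; everything else is the functoriality of substitution into polynomial identities, and no characteristic hypothesis enters (the parenthetical clauses about $\chr K = 0$ merely record the equivalent phrasing ``constant in $y^{(1)}$'').
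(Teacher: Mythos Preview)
Your argument is correct and matches the paper's proof in every essential respect: the paper also derives (1) $\Leftrightarrow$ (2) from theorem \ref{strongnil} together with the chain-rule identity $\jac\bigl(T^{-1}H(Tx)\bigr)=(T^{-1}\cdot\jac H\cdot T)|_{x=Tx}$, and closes the cycle (1) $\Rightarrow$ (3) $\Rightarrow$ (4) $\Rightarrow$ (1) by the very same chain-rule computation for the nested expression. Your explicit inverse substitution $y^{(k+1)}\mapsto v^{(k+1)}-H(v^{(k)})$ in the step (4) $\Rightarrow$ (1) is precisely what the paper means when it says that the left hand side of (1) is a ``homomorphic image'' of the vanishing product; you have simply unpacked that phrase.
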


\begin{proof}
Since (3) \imp (4) is trivial, the following remains to be proved.
\begin{description}

\item[(1) \imp (2)] Assume (1). From theorem \ref{strongnil}, it follows that there exists a 
$T \in \GL_n(K)$ such that $T^{-1} \cdot \jac H \cdot T$ is of the form (\ref{blocktriang}) 
for some $r' \le r$ and hence also for $r' = \min\{r,n\}$. By the chain rule, we obtain
\begin{equation} \label{chainrule}
\jac \big(T^{-1} H(Tx)\big) = T^{-1} \cdot (\jac H)|_{x=Tx} \cdot T = 
(T^{-1} \cdot \jac H \cdot T)|_{x=Tx}
\end{equation}
which gives (2).

\item[(2) \imp (1)] Assume (2). From (\ref{chainrule}), it follows that $T^{-1} \cdot \jac H \cdot T$ 
is of the form (\ref{blocktriang}) as well. Subsequently, (1) can be proved in a similar manner as \pmi of 
theorem \ref{strongnil}.

\item[(1) \imp (3)] Assume (1). By taking the Jacobian with
respect to $y^{(1)}$ of (\ref{Hyj}) 
and dividing by $t^{(j)}t^{(j-1)} \cdots t^{(1)}$, we obtain 
$(\jac H)|_{x=y^{(r)}+\cdots} \cdot 
(\jac H)|_{x=y^{(r-1)}+\cdots} \cdot 
\cdots \cdot (\jac H)|_{x=y^{(1)}+\cdots}$. This product is zero because
it is a homomorphic image of the left hand side of the equality in (1), and (3) follows.

\item[(4) \imp (1)] Assume (4). By taking the Jacobian with
respect to $y^{(1)}$ of (\ref{Hyjw}), we obtain 
$(\jac H)|_{x=y^{(r)}+\cdots} \cdot (\jac H)|_{x=y^{(r-1)}+\cdots} 
\cdot \cdots \cdot (\jac H)|_{x=y^{(1)}+\cdots}$, which is zero on assumption.
Furthermore, the left hand side of (1) is a homomorphic image of it, and (1) follows.
\qedhere

\end{description}
\end{proof}

\noindent
Since all differentiated factors are commuted to the right in the chain rule, this rule 
does not apply in a noncommutative context. Therefore, (3) and (4) of corollary 
\ref{strongnilcor} cannot be generalized to division rings. We will use corollary \ref{strongnilcor}
in the last section.

\subsection*{Comparing regular and strong nilpotency index}

Before constructing some maps $H$ with strongly nilpotent Jacobians,
such that the nilpotency index of the Jacobian is less than the strong nilpotency index,
we formulate a proposition. 

\begin{proposition} \label{idxs}
Let $D$ be a division ring and suppose that $M \in \Mat_m(D[x]) = \Mat_m(D[x_1,x_2,\ldots,x_n])$ is 
strongly nilpotent with index $r$ (in the sense of definition \ref{EHdef}),
such that $r$ exceeds the regular nilpotency index. Then $3 \le r \le m-1$.
\end{proposition}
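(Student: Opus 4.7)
The plan is to apply theorem \ref{strongnil} to put $M$ in the canonical block form (\ref{strongnileq}) and then to handle the two extreme cases $r \le 2$ and $r = m$ directly by computing the regular nilpotency index. After replacing $M$ by $T^{-1}MT$ (which preserves both indices, since $T \in \GL_m(D)$ is a constant matrix and so commutes with any polynomial substitution), I may assume $M$ itself is in the form (\ref{strongnileq}), with $r$ diagonal zero blocks $0_{s_i}$ and subdiagonal blocks $A_i$ of $D$-independent columns. Specializing $y^{(i)} = x$ for all $i$ in (\ref{EHeq}) already gives $M^r = 0$, so the regular nilpotency index $r_0$ always satisfies $r_0 \le r$; the hypothesis of the proposition is that this inequality is strict.

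For the lower bound $r \ge 3$, I rule out $r = 1$ and $r = 2$. If $r = 1$ then $M = 0$, so $r_0 = 1 = r$. If $r = 2$ then $M$ equals $\left(\begin{smallmatrix} 0_{s_1} & 0 \\ A_1 & 0_{s_2} \end{smallmatrix}\right)$ with $A_1 \ne 0$; a direct block multiplication gives $M^2 = 0$ while $M \ne 0$, so again $r_0 = 2 = r$. Neither case admits $r > r_0$.

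For the upper bound $r \le m - 1$, I rule out $r = m$. In that case $\sum_{i=1}^r s_i = m$ together with $s_i \ge 1$ forces $s_i = 1$ throughout, so each $A_i$ reduces to a single element $a_i \in D[x]$ which is nonzero by the independent-columns condition, and $M$ is strictly lower triangular with subdiagonal entries $a_1, \ldots, a_{m-1}$. The only potentially nonzero entry of $M^{m-1}$ is at position $(m,1)$, namely the product $a_{m-1} a_{m-2} \cdots a_1$. The one delicate step, and really the main obstacle in the argument, is to confirm that this product does not vanish. Here I use that the indeterminates $x_j$ commute with $D$, so that a standard leading-monomial argument (using that $D$ is a division ring, so leading coefficients of a product are themselves nonzero products in $D$) shows that $D[x]$ is an integral domain; hence $a_{m-1} \cdots a_1 \ne 0$. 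Therefore $M^{m-1} \ne 0$, giving $r_0 = m = r$, once more contradicting $r > r_0$. Apart from this non-vanishing check, the argument is pure bookkeeping on the canonical form of theorem \ref{strongnil}.
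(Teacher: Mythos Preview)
Your argument is correct and follows essentially the same route as the paper's proof: both rule out $r\le 2$ by observing that then $r_0=r$, and both rule out $r=m$ by noting that the block form of theorem \ref{strongnil} then forces all $s_i=1$, so the $A_i$ are single nonzero elements of $D[x]$ whose product (the $(m,1)$ entry of $M^{m-1}$) is nonzero because $D[x]$ has no zero divisors. The paper is terser---it dispatches the lower bound via ``$r_0\ge 2$, hence $r\ge 3$'' and cites the $\Leftarrow$ part of theorem \ref{strongnil} for the block product---but the substance is the same.
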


\begin{proof}
If the regular nilpotency index equal one, then $M = 0$ and we have $r = 1$ as well,
which contradicts the assumptions.
So $r \ge 3$. Following the proof of $\Leftarrow$ of theorem \ref{strongnil}, we
see that $M^{r-1} = 0$ gives $A_{r-1} \cdot A_{r-2} \cdot \cdots A_1 = 0$ for nonzero matrices 
$A_i$, which consist of only one entry when $r = m$. This is a contradiction, so $r \le m-1$.
\end{proof}

\noindent
Using (2) of corollary \ref{strongnilcor}, one can construct a map $H$ in dimension $n = 4$,
such that $\jac H$ has strong nilpotency index $r = 3$ (in the sense of definition \ref{EHdef}) 
and regular nilpotency index two. Take for instance the (cubic) map
$$
H = (0, x_1^2, x_1^3, 3x_2 x_1^2 - 2x_3 x_1)
$$
in dimension four, or the (cubic) homogeneous map
$$
H = (0, 0, x_1^3, x_1^2 x_2, x_1 x_2^2, x_3 x_2^2 - 2x_4 x_1 x_2 + x_5 x_1^2)
$$
in dimension six. By proposition \ref{idxs}, we see that there is no other combination of 
strong nilpotency index $r$ and matrix dimension $m$, such that $n \le 4$ and $r$ exceeds
the regular nilpotency index. 

The dimension $n = 6$ of the second 
homogeneous map $H$ is minimal as well, at least if the base ring $K$ is a field,
but only for $r = 3$.
The minimality of $n$ for $r = 3$ can be seen as follows. If a homogeneous map
$H$ of degree $d$ in dimension $n \le 5$ with similar properties would exists, 
then by $\jac H \cdot \jac H = 0$, we have 
$\rk \jac H \le \lfloor n/2 \rfloor \le \lfloor 5/2 \rfloor = 2$. Hence $\rk A_2 = \rk A_1 = 1$. 
By homogeneity of $H$, the relations between the rows of $A_1$ decompose into linear relations, 
see e.g.\@ the proof of \cite[Lm.\@ 3]{chengrk2}. Consequently, the image of $A_1$ is determined 
by linear contraints and hence generated by vectors $v$ over $K$.  

Since $\rk A_1 = 1$, the image of $A_1$ is generated by exactly one vector $v$ over $K$.
For this vector $v$, we have $C \cdot v = 0$
for all coefficient matrices $C$ of $A_2$ in case $A_2 \cdot v = 0$. Following the proof of 
$\Leftarrow$ of theorem \ref{strongnil}, we see that $(\jac H)^2 = 0$ gives $A_2 \cdot A_1 = 0$, 
so $A_2 \cdot v = 0$. Hence $C \cdot v = 0$ and $C \cdot A_1 = 0$ for all coefficient matrices 
$C$ of $A_2$. Following the proof of $\Leftarrow$ of theorem \ref{strongnil} again, we get
a contradiction with $r = 3$. 

There is however a homogeneous
map in dimension $n = 5$, from which the Jacobian $M$ is strongly nilpotent with index $r = 4$
and regular nilpotency index three, namely
$$
H = (0, x_1^d, x_1^{d-1} x_2, x_1^{d-2} x_2^2, 2x_1^{d-2} x_2 x3-x_1^{d-1} x_4)
$$
By proposition \ref{idxs} and the study of the case $r = 2$ above, we see that there is no other 
combination of strong nilpotency index $r$ and dimension $n$, such that $M$ is
a homogeneous Jacobian of a polynomial map over a field, $n \le 5$ and $r$ exceeds
the regular nilpotency index. 

\subsection*{Noncommutative polynomials}

Notice that in theorem \ref{strongnil}, one may also assume that the indeterminates
of $x$ are noncommutative, provided the indeterminates of $y^{(j)}$ are
noncommutative for each $j$ as well.

In \cite{Yun}, the author Jie-Tai Yu proves that a homogeneous nilpotent matrix $M$ of 
polynomials over a field $K$ is linearly triangularizable if there exist
a (homogeneous) nilpotent matrix of noncommutative polynomials (free algebra)
which corresponds to $M$. This result can be generalized as follows.

\begin{theorem}
Let $M$ be a matrix of size $m \times m$ of polynomials in noncommutative indeterminates 
$x = (x_i \mid i \in I)$ of (weighted) degree $d$ over a division ring $D$. 

Then $M$ has nilpotency index $r$, if and only if $M$ has strong nilpotency index $r$
(in the sense of definition \ref{EHdef}),
if and only if there exist a $T \in \GL_m(D)$ such that
$T^{-1} M T$ is of the form of (\ref{strongnileq}) in theorem \ref{strongnil}.

In the definition of strong nilpotency (in definition \ref{EHdef}), for each $i,j,k,l$,
it does not matter whether $y^{(j)}_i y^{(l)}_k$ and $y^{(l)}_k y^{(j)}_i$ are equalized
(by commutativity assumptions) or not in case $j \ne l$. It is only necessary that
$y^{(j)}_i y^{(j)}_k$ and $y^{(j)}_k y^{(j)}_i$ are not equalized (by commutativity assumptions). 
So $y^{(j)}_i y^{(j)}_k$ and $y^{(j)}_k y^{(j)}_i$ are equal, if and only if $i = k$.
\end{theorem}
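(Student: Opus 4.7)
The plan is to prove the three equivalences by combining Theorem~\ref{strongnil} with a single new ingredient: in the noncommutative, homogeneous setting, the regular nilpotency index and the strong nilpotency index coincide. The biconditional between strong nilpotency index $r$ and the existence of $T \in \GL_m(D)$ with $T^{-1} M T$ of the form (\ref{strongnileq}) is exactly Theorem~\ref{strongnil}, which, as noted just before the present theorem, applies verbatim once $x$ (and correspondingly each $y^{(j)}$) is allowed to be noncommutative within a tuple. Setting every $y^{(j)} = x$ in (\ref{EHeq}) shows at once that strong nilpotency of index $\le r$ implies regular nilpotency of index $\le r$, so the only nontrivial direction is the reverse: assuming $M^r = 0$, deduce that (\ref{EHeq}) holds.

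For that direction I would work in the freest possible ambient algebra. Let $S = D\langle y^{(j)}_i \mid 1 \le j \le r,\, i \in I\rangle$ be the free $D$-algebra with no commutation relations at all among the $y^{(j)}_i$, and let $P := M|_{x = y^{(r)}} \cdot M|_{x = y^{(r-1)}} \cdot \cdots \cdot M|_{x = y^{(1)}} \in \Mat_m(S)$. Since $M$ is homogeneous of weighted degree $d$, every noncommutative monomial occurring in an entry of $P$ has the block form $u_r u_{r-1} \cdots u_1$, where $u_j$ is a weighted-degree-$d$ word in the variables $y^{(j)}_i$. The $D$-algebra homomorphism $\psi \colon S \to D\langle x_i \mid i \in I\rangle$ defined by $\psi(y^{(j)}_i) = x_i$ for every $j$ satisfies $\psi(M|_{x = y^{(j)}}) = M$ and hence $\psi(P) = M^r = 0$. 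If $\psi$ is injective on the $D$-span of these block-structured monomials, then $P = 0$ in $S$ itself, and therefore $P = 0$ in every quotient of $S$, including the quotient that imposes the commutation conventions of Definition~\ref{EHdef} as well as the freer conventions described in the theorem's final clause.

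The main obstacle is this injectivity, and it rests on both hypotheses of the theorem. Noncommutativity of the $x_i$ guarantees that $\psi(u_r u_{r-1} \cdots u_1)$ is a single well-defined $x$-word whose letters appear in the exact order inherited from the $y$-word. Homogeneity of weighted degree $d$ (with positive integer weights, the standard convention) guarantees that scanning such a $\psi$-image from left to right and cutting off the first prefix of total weight $d$, then the next prefix of additional weight $d$, and so on, determines the block boundaries uniquely, so the underlying $y$-word is recoverable from its $\psi$-image. The theorem's final sentence then follows for free: the argument already lives in the maximally permissive $S$, so any additional commutation relations between different tuples merely pass to a quotient and cannot resurrect $P$, whereas the within-tuple noncommutations $y^{(j)}_i y^{(j)}_k \ne y^{(j)}_k y^{(j)}_i$ for $i \ne k$ are precisely what preserves the letter order on which the injectivity of $\psi$ depends.
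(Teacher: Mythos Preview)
Your proposal is correct and follows essentially the same approach as the paper. The paper's proof sets up the explicit bijection $x_{i_1}\cdots x_{i_{rd}} \mapsto y^{(r)}_{i_1}\cdots y^{(r)}_{i_d}\cdot y^{(r-1)}_{i_{d+1}}\cdots y^{(1)}_{i_{rd}}$ between degree-$rd$ monomials in $x$ and block-structured monomials in the $y^{(j)}$'s, then defers to Theorem~\ref{strongnil}; your homomorphism $\psi$ together with the prefix-cutting argument is precisely the inverse of this bijection, and your passage from the fully free $S$ to its quotients is a clean way to articulate the paper's remark that the cross-tuple commutativity assumptions are immaterial.
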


\begin{proof}
For simplicity in writing, we assume that all entries of $M$ are homogeneous of degree $d$
instead of weighted homogeneous of degree $d$.

Suppose that $y^{(j)} = (y^{(j)}_i \mid i \in I)$ are partially noncommutative polynomial 
indeterminates satisfying the above noncommutativity properties.
The noncommutative terms of degree $rd$ in $x$ correspond to (partially)
noncommutative products of $r$ terms 
of degree $d$ in $y^{(r)}, \allowbreak y^{(r-1)}, \ldots, y^{(1)}$, in that order, by
\begin{align*}
x_{i_1}x_{i_2} \cdots x_{i_{rd}} \; \mapsto ~
& y^{(r)}_{i_1}y^{(r)}_{i_2} \cdots y^{(r)}_{i_d} \cdot \\
& y^{(r-1)}_{i_{d+1}}y^{(r-1)}_{i_{d+2}} \cdots y^{(r-1)}_{i_{2d}} \cdot \\
& \qquad \vdots \\
& y^{(1)}_{i_{(r-1)d+1}}y^{(1)}_{i_{(r-1)d+2}} \cdots y^{(1)}_{i_{rd}} 
\end{align*}
and each entry of $M^r = 0$ corresponds to the same entry of $M|_{x=y^{(r)}} \cdot 
M|_{x=y^{(r-1)}} \cdot \cdots \cdot M|_{x=y^{(1)}}$ in that manner, whence the latter matrix 
product vanishes as well as the former.
 
The rest of the proof is similar to that of theorem \ref{strongnil}.
\end{proof}

\noindent
Taking the Jacobian of $H = (0,x_1 x_2 - x_3, x_1^2 x_2 - x_1 x_3)$, we get
$$ 
M = \left(\begin{array}{ccc}
0 & 0 & 0 \\
x_2 & x_1 & -1 \\
2 x_1 x_2 & x_1^2 & - x_1
\end{array} \right)
$$
even if we see the components of $H$ as noncommutative polynomials. 
Since the trace of $M \cdot M_{x=y}$ equals 
$$
0 + (x_1 y_1 - y_2) - (x_2 + x_1 y_1) = -y_2 - x_2 \ne 0
$$
we see that $M$ is not strongly nilpotent.
But $M^3 = 0$ holds, regardsless of whether we see the entries as commutative or noncommutative 
polynomials over $\C$. Hence the above theorem does not hold when $M$ is not homogeneous.

\section{Strongly nilpotent matrices in the sense of \cite{mo}}

In \cite{Yu}, the notion of strong nilpotency is generalized as follows. Instead of 
looking at Jacobians with polynomial entries, the author Jie-Tai Yu looks at arbitrary
matrices of functions of a set $S$ to a division ring $D$. Not all polynomials can be 
described as the functions they represent, e.g.\@ taking the first variable over a finite field
does not describe $x_1$ because the polynomial $x_1^q$ corresponds to the identity
as well over a finite field with $q$ elements. But over an infinite field, 
different polynomials correspond to different functions. 

For such matrices of functions, Yu proves that they are linearly 
triangularizable over $D$ when $M$ is so called {\em generalized strongly 
nilpotent}, which means that for some $r \in \N$,
\begin{equation} \label{yugen}
M|_{v_1} \cdot M|_{v_2} \cdot \cdots \cdot M|_{v_r} = 0
\end{equation}
for all $v_i \in S$, where $f|_g$ means substituting $g$ in the entries of $f$.
Again, we call the minimum $r$ such that (\ref{yugen}) holds the {\em strong nilpotency index}
of $M$. We can generalize Yu's result in a similar manner as in theorem \ref{strongnil}.

\begin{theorem} \label{strongnilyu}
Let $m \ge 1$ and $M \in \Mat_m(D^S)$ be a matrix of functions from a set $S$ to a division ring $D$. 

Then $M$ has strong nilpotency index $r$ (in the sense of definition \ref{MOdef}, 
but with functions that do not need to be polynomials), if and only if 
there exists a $T \in \GL_m(D)$ such that $T^{-1} M T$ is of the form
$$
\left(\begin{array}{ccccc}
0_{s_1} & & & & \emptyset \\
A_1 & 0_{s_2} & & & \\ 
& A_2 & \ddots & & \\
& & \ddots & 0_{s_{r-1}} & \\
* & & & A_{r-1} & 0_{s_r} 
\end{array} \right)
$$
where $0_{s_i}$ is the square zero matrix of size $s_i \ge 1$ and $A_i$ has independent columns 
over $D$ for each $i$. In particular, the strong nilpotency index of $M$ does not exceed $m$ 
in case $M$ is strongly nilpotent.
\end{theorem}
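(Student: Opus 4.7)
The plan is to mirror the proof of Theorem \ref{strongnil} step by step, with the polynomial ``coefficient matrix'' argument replaced by a direct observation about the common right annihilator in $D^m$: if $C \in \Mat_m(D)$ satisfies $M|_v \cdot C = 0$ for every $v \in S$, then each column of $C$ lies in
$$W := \{w \in D^m : M|_v \, w = 0 \text{ for all } v \in S\},$$
so a nonzero column of $C$ is already a $D$-linear dependence among the columns of $M$ regarded as $D^m$-valued functions on $S$. This is a purely set-theoretic replacement for the polynomial argument and imports no new ideas.

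For the direction ``$\Rightarrow$'' I would induct on $m$. Since $M$ has strong nilpotency index $r$, there exist $v_1, \ldots, v_{r-1} \in S$ with $C := M|_{v_{r-1}} \cdots M|_{v_1} \neq 0$, while $M|_{v_r} \cdot C = 0$ for every $v_r \in S$. The columns of $C$ lie in $W$, so $s_r := \dim_D W \ge 1$. Choose $T \in \GL_m(D)$ whose last $s_r$ columns form a basis of $W$ and whose first $m - s_r$ columns complete this to a basis of $D^m$. After replacing $M$ by $T^{-1} M T$, the last $s_r$ columns of $M$ are the zero function, the first $m - s_r$ are $D$-linearly independent as functions, and $W$ becomes $\operatorname{span}(e_{m - s_r + 1}, \ldots, e_m)$. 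Writing the new matrix as
$$M = \begin{pmatrix} \tilde M & 0 \\ B & 0 \end{pmatrix}, \qquad \tilde M \in \Mat_{m - s_r}(D^S),$$
a direct block computation gives
$$M|_{v_j} \cdots M|_{v_1} = \begin{pmatrix} \tilde M|_{v_j} \cdots \tilde M|_{v_1} & 0 \\ B|_{v_j} \tilde M|_{v_{j-1}} \cdots \tilde M|_{v_1} & 0 \end{pmatrix}.$$

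If $\tilde M|_{v_{r-1}'} \cdots \tilde M|_{v_1'}$ were nonzero for some $v_i' \in S$, then one of the first $m - s_r$ columns of $M|_{v_{r-1}'} \cdots M|_{v_1'}$ would have a nonzero entry in its first $m - s_r$ coordinates; but the outermost factor $M|_{v_r'}$ forces every column of this product to lie in $W = \operatorname{span}(e_{m - s_r + 1}, \ldots, e_m)$, a contradiction. Hence $\tilde M$ has strong nilpotency index at most $r - 1$; since the top-left block of $C$ vanishes and $C \ne 0$, the bottom-left block $B|_{v_{r-1}} \tilde M|_{v_{r-2}} \cdots \tilde M|_{v_1}$ is nonzero, so the strong nilpotency index of $\tilde M$ is exactly $r - 1$. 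Applying the induction hypothesis to $\tilde M$ yields $\tilde T \in \GL_{m - s_r}(D)$ putting $\tilde M$ in the desired form; extending $\tilde T$ by the identity in the last $s_r$ coordinates and composing with the earlier change of basis gives the required $T$ for $M$. Independence of the columns of the resulting $A_{r-1}$ follows because the first $m - s_r$ columns of the final $M$ are still $D$-linearly independent as functions, and within block-column $r - 1$ the only possibly nonzero entries are those of $A_{r-1}$.

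For the direction ``$\Leftarrow$'' a direct induction on $j$ shows that the first $s_1 + \cdots + s_j$ rows of $(T^{-1} M T)|_{v_j} \cdots (T^{-1} M T)|_{v_1}$ vanish, giving strong nilpotency index at most $r$. For the matching lower bound, I would show by induction on $j$ that $A_j|_{v_j} \cdots A_1|_{v_1} \neq 0$ for some $v_i \in S$: given a nonzero column $p$ of $A_{j-1}|_{v_{j-1}} \cdots A_1|_{v_1}$ provided by the induction, the vanishing of $A_j|_v \cdot p$ for every $v \in S$ would be a nontrivial $D$-linear dependence among the columns of $A_j$ as functions, contradicting the hypothesis on $A_j$. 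The main ``obstacle'' --- replacing polynomial coefficient extraction by evaluation at points of $S$ --- is resolved by working with the common annihilator $W$ from the outset; no new difficulty arises, and the induction on $m$ automatically delivers the bound $r \le m$.
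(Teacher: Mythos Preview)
Your proposal is correct and follows essentially the same approach as the paper: the paper's own proof simply points out that the role of the coefficient matrix $C$ in Theorem~\ref{strongnil} is now played by a specific evaluation $C = M|_{v_{r-1}}\cdots M|_{v_1}$, after which everything proceeds as before. Your packaging of this via the common annihilator $W=\{w:M|_v\,w=0\text{ for all }v\}$ is a clean way to say the same thing---it makes the ``maximality of $s_r$'' step transparent, since $s_r=\dim_D W$---but it is not a genuinely different route.
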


\begin{proof} 
Since the proof is essentially similar to that of theorem \ref{strongnil}, we only describe 
a part of the implication $\Rightarrow$.

Assume that $M$ has strong nilpotency index $r$. Then
\begin{align}
M|_{v_r} \cdot M|_{v_{r-1}} \cdot \cdots \cdot M|_{v_2} \cdot M|_{v_1} = 0 \label{meq0}
\intertext{for all $v_1,v_2,\ldots,v_{r-1},v_r \in S$, but}
M|_{v_{r-1}} \cdot \cdots \cdot M|_{v_2} \cdot M|_{v_1} \ne 0 \label{mneq0}
\end{align}
for certain $v_1,v_2,\ldots,v_{r-1} \in S$. Thus for these $v_1,v_2,\ldots,v_{r-1} \in S$,
the left hand side of (\ref{mneq0}) becomes a nonzero matrix $C \in \Mat_m(D)$, 
and we obtain from (\ref{meq0}) that $M|_{v_r} \cdot C = 0$ for all $v_r \in S$. 
\end{proof}

\noindent
Notice that the strong nilpotency index does not exceed the dimension $m$ of the matrix in 
both theorem \ref{strongnil} and theorem \ref{strongnilyu}. We will prove below that this 
property still holds when the base division ring $D$ is replaced by a reduced 
commutative ring $R$, see also \cite[Exercise 7.4.2]{Arnobook}. 
If $R$ is a ring with unity which is not reduced, then there exists an $\epsilon \in R$ such that 
$\epsilon^2 = 0$, and for the matrix $M$ defined by
$$
M := \left( \begin{array}{ccccc}
\epsilon & 0 & \cdots & 0 & 0 \\
1^2 & & & \emptyset & 0 \\
& 1^3 & & & \vdots \\
& & \ddots & & 0 \\
\emptyset & & & 1^m & 0
\end{array} \right)
$$
(where the powers of $1$ are only taken to indicate that the size is $m$)
we have that $M^k$ is obtained from $M$ by shifting all rows $k-1$ places to below,
where zero rows are shifted in above for each row that is shifted out below.
Hence $M^m$ is zero except for the lower left corner entry which equals $\epsilon$,
and the both the regular and the strong nilpotency index of $M$ equal $m + 1$.

\begin{proposition} \label{strongnilyured}
Let $M$ be a square matrix of size $m$ of functions from a set $S$ to a 
reduced commutative ring $R$ with unity. If there exists an $r \in \N$ such that
$M|_{v_r} M|_{v_{r-1}} \cdots M|_{v_1} = 0$ for all $v_i \in S$, 
then $M|_{v_m} M|_{v_{m-1}} \cdots M|_{v_1} = 0$ for all $v_i \in S$.
\end{proposition}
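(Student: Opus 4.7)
The plan is to reduce to the division-ring setting already handled by theorem \ref{strongnilyu}, by quotienting out prime ideals of $R$ and using reducedness to re-assemble the conclusion.

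First, I would fix an arbitrary prime ideal $\p \subset R$ and consider the reduction map $R \to R/\p$, extended entrywise to matrices of functions $S \to R/\p$. Write $\bar M$ for the reduction of $M$. Since $R/\p$ is an integral domain, it embeds into its field of fractions $K_\p$, which is a field, hence a division ring. The hypothesis $M|_{v_r} M|_{v_{r-1}} \cdots M|_{v_1} = 0$ in $\Mat_m(R)$ for all $v_i \in S$ passes through the reduction, giving $\bar M|_{v_r} \bar M|_{v_{r-1}} \cdots \bar M|_{v_1} = 0$ in $\Mat_m(K_\p)$ for all $v_i \in S$. Thus $\bar M$ is strongly nilpotent as a matrix of functions $S \to K_\p$, with strong nilpotency index at most $r$.

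Now I would invoke theorem \ref{strongnilyu} with $D = K_\p$: the strong nilpotency index of $\bar M$ cannot exceed the matrix size $m$. Consequently $\bar M|_{v_m} \bar M|_{v_{m-1}} \cdots \bar M|_{v_1} = 0$ in $\Mat_m(K_\p)$, and since each entry actually lives in $R/\p$, the same identity holds in $\Mat_m(R/\p)$. Translating back, every entry of $M|_{v_m} M|_{v_{m-1}} \cdots M|_{v_1}$ lies in $\p$ for every choice of $v_1,\ldots,v_m \in S$.

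Finally I would use the reducedness of $R$. Since this conclusion holds for every prime $\p$, each entry of $M|_{v_m} M|_{v_{m-1}} \cdots M|_{v_1}$ lies in the intersection of all primes of $R$, which is the nilradical and hence equals $\{0\}$ because $R$ is reduced. Therefore $M|_{v_m} M|_{v_{m-1}} \cdots M|_{v_1} = 0$ for all $v_i \in S$, as desired. I do not anticipate a real obstacle: the one point worth verifying carefully is that theorem \ref{strongnilyu} applies as stated to matrices of functions $S \to K_\p$ (the codomain of the functions changed from $R/\p$ to its fraction field), but this is immediate since enlarging the codomain does not affect any of the vanishing or linear-independence conditions in the statement.
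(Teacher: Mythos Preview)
Your proof is correct and follows essentially the same route as the paper: reduce modulo each prime ideal, pass to the fraction field, apply theorem \ref{strongnilyu} to bound the strong nilpotency index by $m$, and then use reducedness (intersection of all primes is zero) to conclude over $R$. The only cosmetic difference is that the paper phrases the appeal to theorem \ref{strongnilyu} as ``$M$ is linearly triangularizable over $K$'' rather than ``the strong nilpotency index is at most $m$,'' but these amount to the same thing.
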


\begin{proof}
Suppose that there exists an $r \in \N$ such that
$M|_{v_r} M|_{v_{r-1}} \cdots M|_{v_1} = 0$ for all $v_i \in S$.
Since the intersection of all prime ideals of $R$ is zero, it suffices to show 
that for every prime ideal $\p$ of $R$,
all entries of $M|_{v_m} M|_{v_{m-1}} \cdots M|_{v_1}$ are contained in $\p$
for all $v_i \in S$. For that purpose, we  
look at residue classes of $\p$ and hence replace $R$ by $R/\p$, which is an
integral domain. Let $K$ be the field of fractions of $R/\p$. By theorem 
\ref{strongnilyu}, $M$ is linearly triangularizable over $K$, which gives
the desired result.
\end{proof}

\noindent
For reduced noncommutative rings, proposition \ref{strongnilyured} can also
be reduced to the domain case, but domains can not always be embedded in
division rings. Therefore, we cannot prove the following.

\begin{conjecture}
Let $M$ be a square matrix of size $m$ of functions from a set $S$ to a reduced 
noncommutative ring $R$. If there exists an $r \in \N$ such that
$M|_{v_r} M|_{v_{r-1}} \cdots M|_{v_1} = 0$ 
for all $v_i \in S$, then $M|_{v_m} M|_{v_{m-1}} \cdots M|_{v_1} = 0$ for
all $v_i \in S$.
\end{conjecture}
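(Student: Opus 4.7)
The plan is to imitate the proof of Proposition \ref{strongnilyured}, replacing the use of the field of fractions of a reduced quotient by an analogue valid in the noncommutative setting. The first step is the reduction to the case that $R$ is a domain. A classical fact in noncommutative ring theory asserts that every reduced ring is a subdirect product of domains, i.e.\ that every minimal prime ideal of a reduced ring is completely prime. The essential observation is that in a reduced ring, $ab = 0$ forces $(ba)^2 = b(ab)a = 0$ and hence $ba = 0$, which in turn yields $aRb = 0$; the standard $m$-system argument then shows that minimal primes $\p$ are completely prime. The diagonal map $R \hookrightarrow \prod_{\p} R/\p$ thus embeds $R$ into a product of domains, and since the strong nilpotency identity $M|_{v_r} \cdots M|_{v_1} = 0$ is preserved under ring homomorphisms, it suffices to prove the conjecture when $R$ is a noncommutative domain.

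The second step is to prove the conjecture for every such domain. If $R$ embeds into a division ring $D$ --- as happens, for instance, whenever $R$ is a right or left Ore domain, and in particular when $R$ is one-sided Noetherian by Goldie's theorem --- one views $M$ as a matrix of functions with values in $D$ and invokes Theorem \ref{strongnilyu} to conclude that the strong nilpotency index is at most $m$. This covers many domains of practical interest but not all of them.

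The main obstacle is the case of domains $R$ which, by Mal'cev's classical counterexample and similar constructions, do not embed into any division ring. A direct attempt to imitate the proof of Theorem \ref{strongnilyu} over such an $R$ would take a nonzero product $C = M|_{v_{r-1}} \cdot \cdots \cdot M|_{v_1}$ and use $M|_{v_r} \cdot C = 0$ (valid for every $v_r \in S$) to extract a $T \in \GL_m(R)$ for which at least one column of $T^{-1} M T$ vanishes; induction on $m$ would then finish the proof. Over a division ring the required change of basis is elementary, but over a general noncommutative domain $\GL_m(R)$ may be too small to realise the relevant column dependence, and this failure is precisely the phenomenon isolated in Cohn's theory of full matrices and universal fields of fractions. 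A plausible workaround is to pass to a suitable universal matrix localization of $R$: if such a localization can be shown to be a faithful ring homomorphism into a ring in which the strong nilpotency identity is preserved and the division ring argument of Theorem \ref{strongnilyu} goes through, then the reduction succeeds. Establishing this faithfulness from the reducedness hypothesis alone --- or finding an entirely different route that uses the reduced identity $ab=0 \Rightarrow aRb=0$ to simulate the missing change of basis --- is the heart of the remaining difficulty.
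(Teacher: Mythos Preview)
The statement you are attempting to prove is labelled a \emph{Conjecture} in the paper; the paper offers no proof. Immediately before stating it, the author writes that the reduction to domains goes through but that ``domains can not always be embedded in division rings. Therefore, we cannot prove the following.'' Your first paragraph reproduces exactly this reduction (reduced rings are subdirect products of domains via completely prime minimal primes), and your second paragraph handles the Ore case by invoking Theorem~\ref{strongnilyu}. Both of these steps are fine and agree with what the paper asserts is possible.

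The gap is your third paragraph, and you are candid about it: for a domain $R$ that admits no embedding into a division ring (Mal'cev's examples), you have no argument, only a sketch of what one might try via universal localizations, together with an explicit admission that ``establishing this faithfulness \ldots\ is the heart of the remaining difficulty.'' This is not a proof but a restatement of the open problem. The obstacle you isolate is precisely the one the paper isolates, so your proposal neither advances nor contradicts the paper's position: the statement remains a conjecture, and your write-up is a correct summary of why.
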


\section{Quasi-translations with strong nilpotency index two}

A quasi-translation is a polynomial map $x + H$ whose inverse is $x - H$.
In 1876 in \cite{gornoet}, the authors proved that if $x + H$ is a quasi-translation
such that $H$ is homogeneous and $\rk \jac H \le 2$, then $x + g^{-1} H$ is
a quasi-translation such that $\jac (g^{-1}H)$ satisfies the second and hence any of the 
properties of corollary \ref{strongnilcor} with strong nilpotency index two, 
where $g = \gcd\{H_1, H_2, \ldots, H_n\}$. Actually, the authors look at polynomial
maps $x + H$ such that $\jac H \cdot H = 0$, but that is the same as that $x + H$
is a quasi-translation on account of \cite[Prop.\@ 1.1]{qt}. In \cite{zwang}, the author 
proved a similar result as above for non-homogeneous quasi-translations in dimension three.

At the end of this section, we will show for two other types of quasi-translations $x + H$ 
that $\jac H$ itself satisfies the properties of corollary \ref{strongnilcor} with strong nilpotency 
index two. Quasi-translations are important in the study of polynomials whose Hessian determinant vanishes, 
see \cite{gornoet} or the beginning of section 2 of \cite{singhess}.

Theorem \ref{qt2} below shows that theorem \ref{strongnil} and corollary \ref{strongnilcor}
can be used to find equivalent properties for quasi-translations to have 
strong nilpotency index two. It was the study of such quasi-translations which caused the
author to write this article.

\begin{theorem} \label{qt2}
Assume $x + H$ is a quasi-translation over a field $K$ of characteristic zero.
Then the following statements are all equivalent to $\jac H \cdot \jac H|_{x=y} = 0$.
\begin{enumerate}

\item[(1)] $\jac H \cdot H(y) = 0$.

\item[(2)] There exists a $T \in \GL_n(K)$ and an $s$ with $0 \le s < n$ 
such that for $\hat{H} := T^{-1} H(Tx)$, we have $\hat{H}_i = 0$ for all 
$i \le s$ and $\hat{H}_i \in K[x_1,x_2,\ldots,x_s]$ for all $i > s$.

\item[(3)] $H(x + t H(y)) = H$.

\item[(4)] $H(x+H(y)) = H$.

\end{enumerate}
\end{theorem}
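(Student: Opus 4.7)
The plan is to handle the block-triangular statement (2) via corollary \ref{strongnilcor} with $r = 2$, and to establish the remaining equivalences in a cycle using the quasi-translation identity $\jac H \cdot H = 0$ from \cite[Prop.\ 1.1]{qt} together with elementary chain-rule manipulations.

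For the equivalence of $\jac H \cdot \jac H|_{x=y} = 0$ with (2): the $r = 2$ instance of corollary \ref{strongnilcor}(2) supplies a $T \in \GL_n(K)$ for which $\hat H := T^{-1} H(Tx)$ has $\jac \hat H$ with zero diagonal blocks of sizes $s_1$ and $s_2 = n - s_1$, so $\hat H_i = c_i \in K$ for $i \le s_1$ and $\hat H_i \in K[x_1,\ldots,x_{s_1}]$ for $i > s_1$. The delicate step is to turn the constants $c_i$ into zeros. Since $x + \hat H$ is itself a quasi-translation, $\jac \hat H \cdot \hat H = 0$ holds; the $i$-th component for $i > s_1$ reads $\sum_{j \le s_1} c_j\, \partial \hat H_i / \partial x_j = 0$, so each such $\hat H_i$ is constant in the direction $c := (c_1,\ldots,c_{s_1}) \in K^{s_1}$. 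If $c = 0$, take $s := s_1$; otherwise a linear change of coordinates within the first $s_1$ variables that aligns $c$ with the standard basis vector $e_{s_1}$ yields the form (2) with $s := s_1 - 1$ (the first $s_1 - 1$ components become zero and the $s_1$-th becomes the nonzero constant $|c|$, which is still in $K \subset K[x_1,\ldots,x_{s_1-1}]$). The converse is immediate: under (2), rows $1,\ldots,s$ and columns $s+1,\ldots,n$ of $\jac \hat H$ both vanish, so $\jac \hat H \cdot \jac \hat H|_{x=y} = 0$, and conjugation via (\ref{chainrule}) transfers this to $\jac H$.

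For the remaining equivalences I would close the cycle
\[
\bigl(\jac H \cdot \jac H|_{x=y} = 0\bigr) \;\Leftrightarrow\; (1) \;\Rightarrow\; (3) \;\Rightarrow\; (4) \;\Rightarrow\; \bigl(\jac H \cdot \jac H|_{x=y} = 0\bigr).
\]
The equivalence with (1) is by differentiation: (1) $\Rightarrow$ the identity is obtained by differentiating $\jac H(x) \cdot H(y) = 0$ with respect to $y_j$, while for the converse $\partial/\partial y_j\bigl(\jac H(x) \cdot H(y)\bigr) = \jac H(x) \cdot \jac H(y)\, e_j = 0$, so in characteristic zero $\jac H(x) \cdot H(y)$ is independent of $y$; specializing $y = x$ and invoking $\jac H \cdot H = 0$ shows this polynomial vanishes identically. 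For (1) $\Rightarrow$ (3), the identity $\jac H(x) \cdot H(y) = 0$ remains valid when $x$ is replaced by $x + tH(y)$, yielding $\jac H(x + tH(y)) \cdot H(y) = \frac{d}{dt} H(x + tH(y)) = 0$; in characteristic zero this forces $H(x + tH(y))$ to be constant in $t$ with value $H(x)$ at $t = 0$. Setting $t = 1$ gives (4). Finally, for (4) $\Rightarrow$ the identity, differentiating $H(x + H(y)) = H(x)$ with respect to $x$ yields $\jac H(x + H(y)) = \jac H(x)$, while differentiating with respect to $y$ yields $\jac H(x + H(y)) \cdot \jac H(y) = 0$; combining the two gives $\jac H(x) \cdot \jac H(y) = 0$.

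The main obstacle I expect is the constants-to-zero reduction in the proof of (2); every other step is a routine chain-rule manipulation on polynomial identities, built on the quasi-translation identity and the $r = 2$ instance of corollary \ref{strongnilcor}.
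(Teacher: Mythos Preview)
Your argument is correct, but it diverges from the paper's proof in two places, and the comparison is instructive.

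For (2), the paper does not stop at the block shape of corollary~\ref{strongnilcor}(2); it invokes the sharper form of theorem~\ref{strongnil}, in which the off-diagonal block $A_1$ has columns that are independent over $K$. From $\jac\hat H\cdot\hat H(y)=0$ and the vanishing of the last $s_2$ columns of $\jac\hat H$, one gets $A_1\cdot(\hat H_1(y),\ldots,\hat H_{s_1}(y))^{\mathrm t}=0$, and column-independence of $A_1$ kills $\hat H_1,\ldots,\hat H_{s_1}$ outright, with $s=s_1$ and no residual constants. Your route, using only the block shape, leaves constants $c_i$ and then removes them by a further linear change aligned with $c$, at the cost of a case split and possibly $s=s_1-1$. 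Both work; the paper's is shorter because it spends the full strength of theorem~\ref{strongnil}, while yours is more self-contained.

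For the cycle through (3) and (4), the paper goes via items (3) and (4) of corollary~\ref{strongnilcor} together with the quasi-translation identity $H(x+tH)=H$ from \cite[Prop.~1.1]{qt}: it first obtains $H(x+tH(y))=H(x+tH(0))$, specializes $y=x$, and combines. Your direct chain-rule argument --- substituting $x\mapsto x+tH(y)$ into (1) to get $\tfrac{d}{dt}H(x+tH(y))=0$, and for the return differentiating (4) in $x$ and in $y$ separately --- is more elementary and bypasses corollary~\ref{strongnilcor}(3)(4) entirely. It is a cleaner path for this particular equivalence.
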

 
\begin{proof}
By taking the Jacobian of (1) with respect to $y$,  
$\jac H \cdot \jac H|_{x=y} = 0$ follows. Conversely, if $\jac H \cdot \jac H|_{x=y} = 0$,
then by integration with respect to $y$, we have $\jac H \cdot H(y) = \jac H \cdot H(0)$.
By substituting $y$ by $x$ on both sides, we obtain $\jac H \cdot H = 
\jac H \cdot H(0)$, and both equalities combine to $\jac H \cdot H(y) = \jac H \cdot H$.
Hence the equivalence of (1) and $\jac H \cdot \jac H|_{x=y} = 0$ follows from $\jac H \cdot H = 0$, 
which is (2) in the proof of \cite[Prop.\@ 1.1]{qt}.
By subsituting $x = y^{(2)}$ and $y = y^{(1)}$, and vice versa, we see that 
$\jac H \cdot \jac H|_{x=y} = 0$ is equivalent to (1) of corollary \ref{strongnilcor} with $r = 2$.
So (1) is equivalent to (1) of corollary \ref{strongnilcor} with $r = 2$.

We shall show that (2) in turn is equivalent to (2) of corollary \ref{strongnilcor} 
with $r = 2$. It is clear that (2) of corollary \ref{strongnilcor} with $r = 2$ follows from (2).
Conversely, assume (2) of corollary \ref{strongnilcor} with $r=2$. If $\jac H = 0$, then we have (2) 
with $s = 0$, so assume $\jac H \ne 0$. By (2) $\Rightarrow$ (1) of corollary \ref{strongnilcor} and 
$\jac H \ne 0$, we obtain that the strong nilpotency index of $\jac H$ equals two. Hence we may assume 
that $\hat{H} := T^{-1} H(Tx)$ is of the form of (\ref{strongnileq}) in theorem \ref{strongnil}.
By applying the fact that (2) of corollary \ref{strongnilcor} with $r = 2$ implies (1) (of this
theorem), but on $\hat{H}$ instead of $H$, we see that $\jac \hat{H} \cdot \hat{H}(y) = 0$.
Now the independence of the columns of $A_1$ subsequently gives $\hat{H}_1 = \hat{H}_2 = \cdots = 
\hat{H}_{s_1} = 0$, so we have (2) with $s = s_1$ by definition of $\hat{H}$.

In order to show that (3) and (4) are equivalent to (1) and (2) as well, it suffices to show
that (3) of corollary \ref{strongnilcor} with $r=2$ implies (3), that (3) implies (4), and that (4)
implies (4) of corollary \ref{strongnilcor} with $r=2$. The latter follows by subsituting 
$x = y^{(2)}$ and $y = y^{(1)}$, because the Jacobian matrix with respect to $y^{(1)}$ of
$H(y^{(2)})$ is zero. That (3) implies (4) follows by subsituting $t = 1$. So it remains
to show that (3) of corollary \ref{strongnilcor} with $r=2$ implies (3). 

For that purpose, 
suppose we have (3) of corollary \ref{strongnilcor} with $r=2$. By taking $j = 2$, we get 
$t^{(2)} H\big(y^{(2)} + t^{(1)} H(y^{(1)}\big) = H\big(y^{(2)} + t^{(1)} H(0)\big)$, 
which gives $H(x + tH(y)) = H(x + tH(0))$ after suitable substitutions. By substituting $y$ by $x$ on 
both sides, we obtain $H(x + tH(x)) = H(x + tH(0))$, and both equalities combine to $H(x + tH(y)) = 
H(x + tH(x))$. Hence (3) follows from $H(x+tH) = H$, which is (1) in the proof of \cite[Prop.\@ 1.1]{qt}. 
\end{proof}

\noindent
(2) $\Rightarrow$ (3) of the following theorem was proved by E. Formanek in \cite[Th.\@ 4]{for} 
for the case that $H$ has no linear terms, but such a condition is unnecessary.

\begin{theorem} \label{rank1}
Assume $F = x + H$ is a Keller map in dimension $n$ over a field $K$ of characteristic
zero, such that $\rk \jac H = 1$. Then $F$ is invertible and for
\begin{enumerate}

\item[(1)] $H$ has no linear terms,

\item[(2)] $\det \jac F = 1$,

\item[(3)] $\jac H \cdot \jac H|_{x=y} = 0$,

\end{enumerate}
we have (1) $\Rightarrow$ (2) $\Rightarrow$ (3). 
\end{theorem}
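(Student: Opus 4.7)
My plan splits into three parts: showing $F$ is invertible, the implication (1) $\Rightarrow$ (2), and the implication (2) $\Rightarrow$ (3). The common algebraic tool is the matrix determinant lemma applied to a rank-1 decomposition $\jac H = v w^T$ with $v, w \in K[x]^n$, which yields $\det \jac F = \det(I + v w^T) = 1 + w^T v = 1 + \tr \jac H$. The Keller hypothesis then forces $\tr \jac H$ to equal some constant $c - 1 \in K$.

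Implication (1) $\Rightarrow$ (2) is immediate: if $H$ has no linear terms, then $\jac H(0) = 0$, so evaluating $\tr \jac H$ at $x = 0$ gives $c - 1 = 0$, whence $\det \jac F = 1$.

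For (2) $\Rightarrow$ (3), assume $\tr \jac H = 0$. I would invoke the rank-1 structural result in characteristic zero: there are polynomials $P \in K[x]$ and $h_1, \ldots, h_n \in K[t]$ with $H_i = h_i(P)$ (after absorbing constants into $H$). Then $\jac H = h'(P) (\nabla P)^T$, and a direct computation gives $\jac H \cdot \jac H|_{x=y} = h'(P(x)) \bigl[(\nabla P(x))^T h'(P(y))\bigr] (\nabla P(y))^T$, so it suffices to show the scalar $R(x,y) := \sum_i \partial_i P(x) \, h_i'(P(y))$ vanishes identically in $K[x,y]$.

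The core step is a degree comparison. Expanding $h_i'(t) = \sum_l a_{il} t^l$ and setting $B_l(x) := \sum_i a_{il} \partial_i P(x)$, the trace-zero hypothesis reads $\sum_l P(x)^l B_l(x) = 0$ in $K[x]$. Each $B_l$ is a $K$-linear combination of $\partial_1 P, \ldots, \partial_n P$, hence $\deg B_l \le \deg P - 1$; thus the summands $P^l B_l$ have degrees in the disjoint intervals $[l \deg P, (l+1) \deg P - 1]$, forcing $B_l = 0$ for every $l$. Consequently $\sum_i \partial_i P(x) \, h_i'(t) = 0$ in $K[x, t]$, and specializing $t = P(y)$ yields $R \equiv 0$, which proves (3). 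For the invertibility of $F$, the same factorization lets me show that $P \circ F = \phi(P)$ for a linear polynomial $\phi \in K[t]$, whose inverse gives an explicit polynomial inverse of $F$. The main obstacle is the rank-1 structural result $H_i = h_i(P)$: its justification in characteristic zero requires L\"uroth-type arguments combined with polynomiality considerations, and is the most technical ingredient. A possible alternative is to reduce to Formanek's ``no linear terms'' case by translating to a zero of $\jac H$ over $\bar{K}$, though this requires ensuring such a zero exists.
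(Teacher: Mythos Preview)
Your arguments for (1) $\Rightarrow$ (2) and (2) $\Rightarrow$ (3) are correct and take a somewhat different route from the paper. Both start from Formanek's structural lemma $H_i \in K[P]$ and the identity $\det \jac F = 1 + \tr \jac H$ for rank-one $\jac H$, but the paper then performs Gaussian elimination on the $h_i$ to arrange $0 < \deg \hat h_{s+1} < \cdots < \deg \hat h_n$ after a linear coordinate change and runs a case split on the largest variable appearing in $\hat p$. Your expansion $\tr \jac H = \sum_l P^l B_l$ with $\deg B_l \le \deg P - 1$ is more direct: among the nonzero summands, the one of largest index has strictly the largest degree, so the sum can vanish only if every $B_l = 0$, and (3) follows at once. (Phrase it this way rather than ``degrees lie in disjoint intervals'', since $\deg B_l$ can be anywhere in $\{0,\ldots,\deg P - 1\}$.)

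Your invertibility argument, however, has a genuine gap: the claim that $P \circ F = \phi(P)$ for a linear $\phi$ is false in general. Take $n = 2$, $P = x_1^2 + x_2$, $h_1(t) = 1$, $h_2(t) = t$, so that $H = (1,\,x_1^2+x_2)$ and $F = (x_1+1,\,x_1^2+2x_2)$ is Keller with $\det \jac F = 2$; then $P \circ F = 2P + 2x_1 + 1$, which is not a polynomial in $P$, and no other choice of generator of $K[H_1,H_2]$ repairs this. What your $B_l$ analysis really yields is $\sum_i h_i'(t)\,\partial_i P \equiv c-1$, hence $P(x+h(u)) = (c-1)u + P(x+h(0))$, and the shift $P(x+h(0))$ need not lie in $K[P]$. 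Your proposed alternative of translating to a zero of $\jac H$ over $\bar{K}$ also fails in this example, since $\jac H = \bigl(\begin{smallmatrix}0&0\\2x_1&1\end{smallmatrix}\bigr)$ never vanishes. The paper handles invertibility by a different mechanism: the Gaussian-elimination normal form makes $\jac(x+\hat H)$ lower triangular, so $x + \hat H$ is visibly a composition of elementary invertible maps.
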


\begin{proof}
By \cite[Lm.\@ 3]{for}, we have $H_i \in K[p]$ for some $p \in K[x]$, and
by way of Gaussian elimination on the coefficients of the $H_i$ with respect to $p$,
we see that there exists a $T \in \GL_n(K)$ and an $s \ge 0$ such that for 
$\hat{H} = T^{-1} H(Tx)$, we have $\hat{H}_i \in K$ for all $i \le s$ and
$0 < \deg \hat{H}_{s+1} < \deg \hat{H}_{s+2} < \cdots < \deg \hat{H}_n$.
Furthermore, $\det \jac (x + \hat{H}) \in K^{*}$ and $\hat{H}_i \in K[\hat{p}]$ for all $i$, 
where $\hat{p} = p(Tx)$. We shall show below that $\jac (x + \hat{H})$ is a lower triangular 
matrix (but not necessarily with ones on the diagonal). Consequently, $x + \hat{H}$ is a 
composition of elementary invertible polynomial maps and $F$ is a tame invertible map,
because $\det \jac (x + \hat{H}) \in K^{*}$.

Notice that $\det \jac (x + \hat{H}) - 1$
is the sum of the determinants of all principal minor matrices of $\jac \hat{H}$.
Since $\rk \jac \hat{H} = 1$, all minor determinants of size $2 \times 2$ 
or greater vanish. Hence $\tr \jac \hat{H} = \det \jac (x + \hat{H}) - 1 \in K$. 
Take $i$ maximal, such that
$\parder{}{x_i} \hat{p} \ne 0$. If $i \le s$, then we are in the situation of (2)
of corollary \ref{strongnilcor} and we have (3) and hence also (2), because $\jac H$
is nilpotent and $\det \jac F - 1$ is the sum of the
determinants of all principal minor matrices of $\jac H$. 

Thus assume that $i > s$. Then $\parder{}{x_j}\hat{H}_j = 0$ for all $j > i$, 
$\deg \parder{}{x_j}\hat{H}_j < \deg \hat{H}_j \le \deg \hat{H}_i - \deg \hat{p}$ for all $j < i$,
and $\deg \hat{H}_i - \deg \hat{p} \le \deg \parder{}{x_i}\hat{H}_i$ because 
$\parder{}{x_i}\hat{H}_i$ is divisible by a polynomial in $\hat{p}$
of sufficiently large degree. Hence $\deg \parder{}{x_j}\hat{H}_j < \deg \parder{}{x_i}\hat{H}_i$ 
for all $j \ne i$ and therefore $\deg \parder{}{x_i}\hat{H}_i = \deg \tr \jac \hat{H} \le 0$.
Consequently, $\parder{}{x_j}\hat{H}_j = 0$ for all $j \ne i$ and $\hat{H}_i$ has degree 
one in $\hat{p}$, which in turn has degree one in $x_i = x_{s+1}$ with leading coefficient in $K$
as a polynomial over 
$K[x_1,x_2,\ldots,x_s]$. This contradicts (1). By $\hat{H}_j \in K$ for all $j \le s$ and
$\hat{H}_j \in K[x_1,x_2,\ldots,x_s,x_{s+1}]$ for all $j > s$, we get $\det \jac (x + \hat{H}) = 
1 + \parder{}{x_i}\hat{H}_i \ne 1$, which contradicts (2) by way of (\ref{chainrule}).
\end{proof}

\begin{corollary}
Assume $x + H$ is a quasi-translation over a field $K$ of characteristic zero, such that
$1 \in \{\deg H, \rk \jac H\}$. Then $\jac H$ has strong nilpotency index two, i.e.\@
$\jac H \cdot \jac H|_{x=y} = 0$. 
\end{corollary}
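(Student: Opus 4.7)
The plan is to dispatch the two hypotheses of the disjunction separately: first handle $\deg H = 1$ by a direct calculation, then handle $\rk \jac H = 1$ by reducing to Theorem \ref{rank1}. In both cases the main tool is the relation $\jac H \cdot H = 0$, which is equivalent to $x + H$ being a quasi-translation by Prop.\@ 1.1 of \cite{qt}, as already used in the proof of Theorem \ref{qt2}.

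For $\deg H = 1$, I would write $H = Ax + b$ with $A = \jac H$ a constant matrix in $\Mat_n(K)$. The identity $\jac H \cdot H = 0$ then becomes $A(Ax+b) = A^2 x + Ab = 0$, which forces $A^2 = 0$. Since $\jac H = A$ is constant, this already reads $\jac H \cdot \jac H|_{x=y} = A \cdot A = 0$, and we are done.

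For $\rk \jac H = 1$, my plan is to apply the implication (2) $\Rightarrow$ (3) of Theorem \ref{rank1}, so I need $\det \jac F = 1$ where $F = x + H$. Since $F$ is a quasi-translation, its inverse $x - H$ is polynomial, so $F$ is a Keller map and $c := \det \jac F$ lies in $K^{*}$. The rank-one hypothesis makes every $k \times k$ principal minor of $\jac H$ vanish for $k \ge 2$, so $\det(I \pm \jac H) = 1 \pm \tr \jac H$; hence $\tr \jac H = c - 1$ is constant, and $\det \jac F^{-1} = \det(I - \jac H) = 2 - c$ is constant as well. Taking determinants in the chain-rule identity $\jac F^{-1}|_F \cdot \jac F = I$ then yields $c(2-c) = 1$, i.e.\@ $(c-1)^2 = 0$, forcing $c = 1$. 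Theorem \ref{rank1} now delivers the conclusion. The only non-routine step will be this $\det \jac F = 1$ computation, which crucially depends on the rank-one hypothesis collapsing the characteristic polynomial of $\jac H$ to something affine in $\tr \jac H$.
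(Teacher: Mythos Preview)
Your argument is correct and follows the same overall plan as the paper: split on the disjunction, handle $\deg H = 1$ by reading off $A^2 = 0$ from $\jac H \cdot H = 0$, and in the rank-one case feed $\det \jac F = 1$ into (2) $\Rightarrow$ (3) of Theorem \ref{rank1}.

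The one place where you diverge from the paper is in establishing $\det \jac F = 1$ in the rank-one case. The paper dispatches this in a single line: by \cite[Prop.\@ 1.1]{qt} the Jacobian $\jac H$ of a quasi-translation is nilpotent, and for any nilpotent $N$ one has $\det(I+N)=1$. Your route --- using the rank-one expansion $\det(I\pm\jac H)=1\pm\tr\jac H$, the invertibility of $F$ to force $c\in K^{*}$, and then the chain rule for $F^{-1}=x-H$ to obtain $c(2-c)=1$ --- is longer but has the virtue of being self-contained: it does not rely on the nilpotency clause of the cited proposition, only on the defining property that the inverse of a quasi-translation is $x-H$. Either approach is fine; yours trades a citation for a short computation that exploits the rank-one hypothesis a second time.
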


\begin{proof}
Suppose first that $\deg H = 1$. Since $\jac H$ is a constant matrix, the strong nilpotency index and 
the nilpotency index correspond. Furthermore, the part of degree one of $\jac H \cdot H$ equals 
$(\jac H)^2 \cdot x$, and $(\jac H)^2 = 0$ follows. 

Suppose next that $\rk \jac H = 1$. By \cite[Prop.\@ 1.1]{qt}, $\jac H$ is nilpotent, so $\det \jac F = 1$. 
Hence the desired result follows from (2) $\Rightarrow$ (3) of theorem \ref{rank1}.
\end{proof}

\paragraph{Acknowledgment.} The author wishes to thank the referee for his careful reading and
for several valuable comments. \\
This is a preprint of an article submitted for consideration in
Linear and Multilinear Algebra {\copyright} 2012 copyright Taylor \& Francis.


\begin{thebibliography}{BCW}

\bibitem[BCW]{bcw} Hyman Bass, Edwin H. Connell and David Wright, 
\emph{The Jacobian conjecture: reduction of degree and formal expansion of the inverse.}
Bull.\@ Amer.\@ Math.\@ Soc.\@ (N.S.) 7 (1982), no.\@ 2, 287--330.

\bibitem[BoE]{singhess} Michiel de Bondt and Arno van den Essen, 
\emph{Singular Hessians.}
J. Algebra 282 (2004), no.\@ 1, 195-204.

\bibitem[Bo]{qt} Michiel de Bondt,
\emph{Quasi-translations and counterexamples to the homogeneous dependence problem.} 
Proc.\@ Amer.\@ Math.\@ Soc.\@ 134 (2006), no.\@ 10, 2849--2856 (electronic).

\bibitem[Ch]{chengrk2} Charles Ching-An Cheng,
\emph{Power linear Keller maps of rank two are linearly triangularizable.}
J. Pure Appl.\@ Algebra (2005), no.\@ 2, 127--130.

\bibitem[E]{Arnobook} Arno van den Essen, 
\emph{Polynomial automorphisms and the Jacobian Conjecture.} 
Progress in Mathematics, Vol.\@ 190, Birkh{\"a}user, 2000.

\bibitem[EH]{eh} Arno van den Essen and Engelbert Hubbers, 
\emph{Polynomial maps with strongly nilpotent Jacobian matrix and 
the Jacobian conjecture.} 
Linear Algebra Appl.\@ 247 (1996), 121--132.

\bibitem[F]{for} E. Formanek, 
\emph{Two notes on the Jacobian conjecture.}  
Arch.\@ Math.\@ (Basel) 49 (1987), no.\@ 4, 286--291.

\bibitem[GN]{gornoet} P. Gordan and M. N{\"o}ther, 
\emph{{\"U}ber die algebraischen Formen, deren Hesse'sche Determinante identisch 
verschwindet.} 
(German) Math.\@ Ann.\@ 10 (1876), no.\@ 4, 547--568.

\bibitem[MO]{mo} Gary H. Meisters and Czes{\l}aw Olech, 
\emph{Strong nilpotence holds in dimensions up to five only.}
Linear and Multilinear Algebra 30 (1991), no.\@ 4, 231--255. 

\bibitem[W]{zwang} Zhiqing Wang, 
\emph{Homogeneization of locally nilpotent derivations and an application 
to $k[X,Y,Z]$.} 
J. Pure Appl.\@ Algebra, 196 (2005), no.\@ 2-3, 323--337.

\bibitem[Ya]{yagzhev} A.V. Yagzhev, 
\emph{Keller's problem.}
Siberian Math.\@ J. 21 (1980), no.\@ 5, 747--754 (1981).

\bibitem[Y1]{Yu} Jie-Tai Yu, 
\emph{On generalized strongly nilpotent matrices.} 
Linear and Multilinear Algebra 41 (1996), no.\@ 1, 19--22. 

\bibitem[Y2]{Yun} Jie-Tai Yu, 
\emph{Noncommutative-nilpotent matrices and the Jacobian conjecture.} 
Linear and Multilinear Algebra 41 (1996), no.\@ 2, 189--192.

\end{thebibliography}
\end{document}